\title{Note on holomorphic Morse inequality tensoring with a coherent sheaf}
\author{Xiaojun WU}
\date{\today}
\newtheorem{mythm}{Theorem}
\newtheorem{mylem}{Lemma}
\newtheorem{myprop}{Proposition}
\newtheorem{myex}{Example}
\newtheorem{mycor}{Corollary}
\newtheorem{mydef}{Definition}
\newtheorem{myconj}{Question}
\begin{document}
\maketitle
  \def\tors{\mathrm{Tors}}
\def\cI{\mathcal{I}}
\def\cJ{\mathcal{J}}
\def\Z{\mathbb{Z}}
\def\Q{\mathbb{Q}}  \def\C{\mathbb{C}}
 \def\R{\mathbb{R}}
 \def\N{\mathbb{N}}
 \def\H{\mathbb{H}}
  \def\P{\mathbb{P}}
 \def\rC{\mathcal{C}}
  \def\nd{\mathrm{nd}}
  \def\d{\partial}
 \def\dbar{{\overline{\partial}}}
\def\dzbar{{\overline{dz}}}
 \def\ii{\mathrm{i}}
  \def\d{\partial}
 \def\dbar{{\overline{\partial}}}
\def\dzbar{{\overline{dz}}}
\def \ddbar {\partial \overline{\partial}}
\def\cN{\mathcal{N}}
\def\cE{\mathcal{E}}  \def\cO{\mathcal{O}}
\def\cF{\mathcal{F}}
\def\cS{\mathcal{S}}
\def\cQ{\mathcal{Q}}
\def\cG{\mathcal{G}}
\def\P{\mathbb{P}}
\def\cI{\mathcal{I}}
\def \loc{\mathrm{loc}}
\def \cC{\mathcal{C}}
\bibliographystyle{plain}
\def \dim{\mathrm{dim}}
\def \Sing{\mathrm{Sing}}
\def \Id{\mathrm{Id}}
\def \rank{\mathrm{rank}}
\def \tr{\mathrm{tr}}
\def \Ric{\mathrm{Ric}}
\def \Vol{\mathrm{Vol}}
\def \RHS{\mathrm{RHS}}
\def \liminf{\mathrm{liminf}}
\def \ker{\mathrm{Ker}}
\def \nn{\mathrm{nn}}
\def \im{\mathrm{Im}}
\def \Alb{\mathrm{Alb}}
\def \Pic{\mathrm{Pic}}
\def \diam{\mathrm{diam}}
\def \Aut{\mathrm{Aut}}
\def \Psh{\mathrm{PSH}}
\def \reg{\mathrm{reg}}
\def \coh{\mathrm{coh}}
\def \Tor{\mathrm{Tor}}
\def\stateparagraph{\vskip7pt plus 2pt minus 1pt\noindent}

\maketitle
\begin{abstract}
In this note, we show various versions of holomorphic Morse inequality tensoring with a coherent sheaf. 
\end{abstract}

In the fundamental work of \cite{Dem85}, the following holomorphic Morse inequalities are proven.
\begin{mythm}
Let $X$ be a compact complex manifold of dimension $n$.
Let $F$ be a holomorphic vector bundle of rank $r$ and $E$ be a holomorphic line bundle with a smooth Hermitian metric on $X$.
Let us denote by $X(q),0 \leq q \leq n$, the open subset of points of $x \in X$ that are of index $q$, i.e. points $x$ at which the $(1,1)$ Chern curvature form $ic_1(E)(x)$ has exactly $q$ negative and $n-q$ positive eigenvalues.
We also put $$X(\leq q)=X(0) \cup X(1) \cup \cdots \cup X(q).$$
For all degrees $q= 0,1, \cdots,n$, the following asymptotic inequalities hold when $k$ tends to $+\infty$.
\begin{enumerate}
\item Morse inequalities:
$$\dim H^q(X,E^k \otimes F)\leq r \frac{k^n}{ n !} \int_{X(q)}(-1)^q(i2 \pi c_1(E))^n+o(k^n).$$
\item Strong Morse inequalities:
$$\sum_{j=0}^q (-1)^{q-j} \dim H^j(X,E^k \otimes F)\leq r \frac{k^n}{ n !} \int_{X(\leq q)}(-1)^q(i2 \pi c_1(E))^n+o(k^n).$$
\item Asymptotic Riemann-Roch formula:
$$\sum_{j=0}^n (-1)^{q-j} \dim H^j(X,E^k \otimes F)= r \frac{k^n}{ n !} \int_{X}(-1)^q(i2 \pi c_1(E))^n+o(k^n).$$
\end{enumerate}
\end{mythm}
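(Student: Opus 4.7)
The plan is to reduce everything to an asymptotic spectral count for the Kodaira--Laplacian $\Delta_k=\dbar\dbar^{\ast}+\dbar^{\ast}\dbar$ acting on $E^k\otimes F$-valued $(0,q)$-forms, with respect to a fixed Hermitian metric $\omega$ on $X$ and fixed smooth metrics on $E$ and $F$. By Hodge theory $\dim H^q(X,E^k\otimes F)$ equals $\dim\ker\Delta_k^q$, and by the min--max principle this is bounded by the number $N^q(\lambda,k)$ of eigenvalues of $\Delta_k^q$ that are at most $\lambda$, for any fixed $\lambda>0$. All three inequalities will follow from a precise asymptotic formula for $N^q(\lambda,k)$ as $k\to+\infty$, together with the elementary algebraic inequalities relating $\dim\ker\Delta_k^q$ to alternating sums of $N^j(\lambda,k)$.

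The core of the argument is local. Apply the Bochner--Kodaira--Nakano identity to $\Delta_k$: the curvature contribution splits as $k$ times the operator $[i c_1(E),\Lambda]$ acting on $(0,q)$-forms, plus zeroth-order terms coming from the curvature of $F$, the torsion of $\omega$, and the Ricci curvature; these lower-order terms are bounded independently of $k$. Then localize: cover $X$ by coordinate balls on which $E$ and $F$ are trivialized and $h_E=e^{-\varphi}$ with $\varphi$ a smooth plurisubharmonic-type weight whose Hessian at the center $x_0$ is $ic_1(E)(x_0)$. Rescaling coordinates by $z\mapsto z/\sqrt{k}$ turns $k^{-1}\Delta_k$ into a model operator on $\C^n$ of harmonic-oscillator type associated with a quadratic weight of signature determined by the eigenvalues of $ic_1(E)(x_0)$. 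The spectrum of this model is explicit: in bidegree $(0,q)$ there are $r$ small eigenvalues per unit volume precisely when $x_0\in X(q)$, and the corresponding density equals $(2\pi)^{-n}|\det ic_1(E)(x_0)|\cdot r$ times the characteristic function of $X(q)$; elsewhere there is a uniform spectral gap in bidegree $(0,q)$. The factor $r$ appears simply because $F$ enters as a free tensor factor after trivialization, and the zeroth-order contribution from $F$ is dominated by the curvature of $kE$.

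Integrating the model density against the Riemannian measure, and patching the local estimates via cut-off functions, yields the asymptotic $N^q(\lambda,k)\leq r\frac{k^n}{n!}\int_{X(q)}(-1)^q(i2\pi c_1(E))^n+o(k^n)$, which gives the weak inequality. The strong inequality follows from the same local analysis applied to alternating sums, using that the model operators commute with the de Rham-type differentials so that eigenspaces for different $j\leq q$ line up; this is encoded in the classical identity $\sum_{j=0}^q(-1)^{q-j}\dim\ker\Delta_k^j\leq\sum_{j=0}^q(-1)^{q-j}N^j(\lambda,k)$, and on the model side the indices combine to produce integration over $X(\leq q)$. For the Riemann--Roch statement, the alternating sum is the Euler characteristic $\chi(X,E^k\otimes F)$, and the formula follows from Hirzebruch--Riemann--Roch after extracting the top power of $k$ in $\mathrm{ch}(E^k\otimes F)\mathrm{Td}(X)$.

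The main obstacle, in my view, is the uniform localization and spectral-gap estimate: one must show that eigenforms of $\Delta_k$ with eigenvalue below $\lambda$ concentrate, in $L^2$, on shrinking balls at scale $1/\sqrt{k}$, and are uniformly close there to eigenforms of the rescaled model. This is what makes the patching of local spectral densities legitimate and controls the error terms $o(k^n)$. It is typically handled either via heat-kernel/Mehler-formula estimates with off-diagonal Gaussian decay, or via Agmon-type integration by parts against weighted cutoffs; the auxiliary bundle $F$ only enters as a bounded zeroth-order perturbation and an overall dimension factor $r$, so it does not alter the leading-order asymptotic.
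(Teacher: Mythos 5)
This theorem is stated in the paper only as a quotation of Demailly's result from \cite{Dem85} and is not proved there, so there is no internal proof to compare against; your outline reproduces the strategy of Demailly's original argument (Hodge-theoretic reduction to counting the small eigenvalues of the Kodaira Laplacian, the Bochner--Kodaira--Nakano identity isolating the $k\,[ic_1(E),\Lambda]$ term, rescaling at scale $1/\sqrt{k}$ to a harmonic-oscillator model, and the algebraic inequality bounding truncated alternating sums of $\dim\ker\Delta_k^j$ by those of $N^j(\lambda,k)$), with $F$ correctly entering only through the rank $r$ and a bounded zeroth-order perturbation. The one point to keep explicit is that the strong inequalities require two-sided asymptotics for $N^j(\lambda,k)$ (with an error controlled as $\lambda\to 0$), not merely the upper bound you display, because of the alternating signs; your opening reference to a ``precise asymptotic formula'' indicates you are aware of this.
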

It is easy to show that the Morse inequalities and the asymptotic Riemann-Roch formula can be deduced from the strong Morse inequalities.
In this note, we will concentrate on generalizing the strong Morse inequalities.

Note that the factor of fixed vector bundle $F$ contributes only its rank in the asymptotic formula.
Thus it is natural to guess or ask whether the same holds if we change the vector bundle by a coherent sheaf (at least in the torsion-free case).
On the other hand, it is natural to weaken the condition that $X$ is a manifold.

In general, to reduce to the manifold and vector bundle case, one need to control the cohomologies of some torsion sheaves.
To do the induction on dimension, we need to prepare some results on the Grothendieck group on compact complex space.
For more information about the Grothendieck ring of coherent sheaves over complex manifolds, we refer to the paper \cite{Gri} and Section 65.14 \cite{St}.
Let $Z$ be a closed analytic set of some compact complex space $X$.
Let $\coh_Z(X)$ be the set of coherent sheaves on $X$ supported in $Z$.
We have the following equivalent definition.
$$\coh_Z(X)=\{\cF \in \coh(X); \exists N >>0, \cI_Z^N \cF=0\}.$$
It is easy to see that the right-handed side set is contained in $\coh_Z(X)$.
Now we prove the converse inclusion.
Let $i: Z \to X$ be the closed immersion of $Z$ into $X$ with a reduced structural sheaf.
In general, we have a natural morphism for any coherent sheaf $\cF$ on $Z$,
$i^* i_* \cF \to \cF$.
Since $Z$ is closed, by definition, we can easily check that the natural morphism $i^{-1} i_* \cF \to \cF$ is always surjective (which is, in fact, an isomorphism).
Here we denote $i^{-1}$, the inverse image functor in the category of sheaves of abelian groups and $i^*$, the inverse image functor in the category of coherent sheaves.
Since $i^* i_* \cF=i^{-1} i_* \cF \otimes_{i^{-1} \cO_X} \cO_Z$, $i^{-1} \cO_X  \to \cO_Z$ is surjective and taking tensor product is right exact,
the map from $i^{-1} i_* \cF$ to $ i^* i_* \cF$ in the following composition of maps
$$i^{-1} i_* \cF=i^{-1} i_* \cF \otimes_{i^{-1} \cO_X}i^{-1} \cO_X \to i^* i_* \cF \to \cF\otimes_{i^{-1} \cO_X} \cO_Z \to \cF$$
is also surjective.
Since the composition of maps is an isomorphism, $i^* i_* \cF\to \cF$ is also an isomorphism.

Let $\cF$ be a coherent sheaf on $X$ supported in $Z$.
Cover $X$ by finite open sets $U_\alpha$ such that $U_\alpha$ is contained in an open set $\Omega_\alpha$ of $\C^{N_\alpha}$ for some $N_\alpha$.
Let $i_\alpha$ be the inclusion of $U_\alpha$.
$i_{\alpha*} \cF|_{U_\alpha}$ is a coherent sheaf on $\Omega_\alpha$.
Since it is supported in $i_\alpha(Z)$ (with possible shrinking $U_\alpha$ a bit,) there exists $N'_\alpha$ large enough such that
$$\cI_{i_\alpha(Z)}^{N'_\alpha} i_{\alpha*} \cF|_{U_\alpha}=0$$
by Hilbert's Nullstellensatz.
Pull back by $i_\alpha$ gives
$$\cI_{Z}^{N'_\alpha} i^*_\alpha i_{\alpha*} \cF|_{U_\alpha}=0 \to \cI_{Z}^{N'_\alpha} \cF|_{U_\alpha}=0.$$
Since $X$ is compact, $N=\max_\alpha N'_\alpha < \infty$ which shows the inverse inclusion.

Define the Grothendieck group $G_Z(X)$ as $\Z[\coh_Z(X)]$ modulo the following equivalent relation.
$[\cS]+[\cQ]=[\cF] \in G_Z(X)$ if there exists an exact sequence 
$$0 \to \cS \to \cF \to \cQ \to 0$$
for the coherent sheaves $\cS, \cF, \cQ$ supported in $Z$.
Let $G(Z)$ be the Grothendieck group of coherent sheaves on $Z$ (supported in $Z$).
Then we have the following lemma.
\begin{mylem}
We have isomorphism
$$i_{Z*}: G(Z) \simeq G_Z(X).$$
\end{mylem}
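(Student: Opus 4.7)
\emph{Proof proposal.} The plan is to produce an explicit inverse $\phi\colon G_Z(X)\to G(Z)$ to $i_{Z*}$ by d\'evissage along the $\cI_Z$-adic filtration, as in the algebraic case.

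Surjectivity of $i_{Z*}$ follows from the preceding material: given $\cF\in\coh_Z(X)$, pick $N$ with $\cI_Z^N\cF=0$. The $\cI_Z$-adic filtration
$$\cF\supset\cI_Z\cF\supset\cdots\supset\cI_Z^N\cF=0$$
has successive quotients $\cI_Z^j\cF/\cI_Z^{j+1}\cF$ killed by $\cI_Z$, hence by the isomorphism $i^*i_*\simeq\mathrm{id}$ established above, each is uniquely of the form $i_{Z*}\cG_j$ with $\cG_j\in\coh(Z)$. Therefore $[\cF]=\sum_j i_{Z*}[\cG_j]$ in $G_Z(X)$, so $i_{Z*}$ is surjective.

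For injectivity, I would define $\phi(\cF):=\sum_j[\cG_j]\in G(Z)$ and verify two things: (a) $\phi$ is independent of the chosen filtration of $\cF$ with $\cI_Z$-annihilated successive quotients, and (b) $\phi$ is additive on short exact sequences $0\to\cS\to\cF\to\cQ\to 0$ in $\coh_Z(X)$. For (a) I would use a Schreier--Zassenhaus refinement: any two such filtrations admit a common refinement formed by inserting the subsheaves $\cF_i+(\cF_{i+1}\cap\cF'_j)$, each refined quotient remains $\cI_Z$-annihilated, and each elementary insertion replaces one class $[\cG_j]$ by a sum $[\cG'_j]+[\cG''_j]$ arising from a short exact sequence in $\coh(Z)$, so $\sum_j[\cG_j]$ is preserved. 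For (b) I would splice the adic filtration of $\cS$ with the preimage along $\cF\twoheadrightarrow\cQ$ of the adic filtration of $\cQ$; the resulting filtration of $\cF$ has associated graded equal to the concatenation of those of $\cS$ and $\cQ$, whence $\phi(\cF)=\phi(\cS)+\phi(\cQ)$ by (a).

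Finally, $\phi\circ i_{Z*}=\mathrm{id}$ since any $i_{Z*}\cG$ is itself $\cI_Z$-annihilated and the one-step filtration computes $\phi(i_{Z*}\cG)=[\cG]$, and $i_{Z*}\circ\phi=\mathrm{id}$ by applying $i_{Z*}$ termwise to the surjectivity formula above, which completes the proof. The hard part will be (a): verifying carefully that sums and intersections of coherent subsheaves in the complex-analytic category produce refined successive quotients still lying in the essential image of $i_{Z*}$, so that the Schreier refinement stays inside the subcategory on which $\phi$ is defined. Once this dévissage step is secured the remaining verifications are formal.
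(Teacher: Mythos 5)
Your proposal is correct and follows essentially the same route as the paper: both reduce the statement to d\'evissage along the $\cI_Z$-adic filtration $\cF\supset\cI_Z\cF\supset\cdots\supset\cI_Z^N\cF=0$, whose graded pieces lie in the essential image of $i_{Z*}$. The only difference is that the paper simply cites Quillen's d\'evissage theorem at this point, whereas you unpack its proof (Schreier refinement for well-definedness of the inverse map, splicing of filtrations for additivity); your worry about the refinement step is unfounded, since the subcategory of $\cI_Z$-annihilated sheaves is closed under subobjects and quotients, which is all the refinement argument needs.
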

\begin{proof}
Notice that since $i_{Z}$ is a closed immersion such that $i_{Z*}$ is exact, the morphism $i_{Z*}$ is well defined between Grothendieck groups.
If $\cF \in \coh_Z(X)$, $\cF$ can be given a finite filtration by the formula $F^i\cF :=\cI^i_Z \cF$.
For any $i$, the associated graded piece $F^i \cF /F^{i+1} \cF$ is in $\coh(Z)$.
Thus we can apply the dévissage theorem to the K-theory group (cf. Theorem 4, Section 5 \cite{Qui}).
\end{proof}
Let $Z_i$ be the irreducible components of $Z$.
We have the following lemma.
\begin{mylem}
The natural morphism
$$\oplus i_{Z_i*}:\oplus_i G_{Z_i}(X) \simeq \oplus_i G(Z_i) \to G_Z(X)$$
is surjective.
\end{mylem}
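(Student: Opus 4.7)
The plan is to use the isomorphism of the previous lemma and exhibit, for every $\cF \in \coh_Z(X)$, a finite filtration whose successive quotients lie in $\coh_{Z_i}(X)$ for various $i$, so that $[\cF]$ becomes a sum of classes in the images of the individual summands.

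Let $\cF \in \coh_Z(X)$, and choose $N$ with $\cI_Z^N \cF = 0$, which exists by the discussion preceding Lemma 1. Writing $\cI_{Z_i} \subset \cO_X$ for the reduced ideal sheaves of the components, one has $\cI_{Z_1}\cdots\cI_{Z_r} \subseteq \bigcap_i \cI_{Z_i} = \cI_Z$, whence, by commutativity of ideal multiplication,
$$\bigl(\cI_{Z_1}\cdots\cI_{Z_r}\bigr)^N \cF = \cI_{Z_1}^N \cdots \cI_{Z_r}^N\, \cF \subseteq \cI_Z^N \cF = 0.$$
Fix the ordered list of ideals $\cI_{Z_1}, \cI_{Z_2}, \ldots, \cI_{Z_r}$ repeated $N$ times (total length $rN$), and set $\cF_0 = \cF$ and $\cF_k = \cI_{Z_{j_k}} \cF_{k-1}$, where $j_k$ denotes the $k$-th index in this list. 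This produces a finite descending filtration
$$\cF = \cF_0 \supseteq \cF_1 \supseteq \cdots \supseteq \cF_{rN} = 0$$
by coherent subsheaves of $\cF$, and each successive quotient $\cF_{k-1}/\cF_k$ is annihilated by $\cI_{Z_{j_k}}$, so it lies in $\coh_{Z_{j_k}}(X)$.

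Applying the defining relation of $G_Z(X)$ to the short exact sequences $0 \to \cF_k \to \cF_{k-1} \to \cF_{k-1}/\cF_k \to 0$ iteratively yields
$$[\cF] = \sum_{k=1}^{rN} [\cF_{k-1}/\cF_k] \in G_Z(X),$$
and each summand comes, via the previous lemma applied to $Z_{j_k}$, from $G(Z_{j_k}) \simeq G_{Z_{j_k}}(X)$. This exhibits $[\cF]$ in the image of $\oplus_i G_{Z_i}(X) \to G_Z(X)$.

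The one point requiring care is the pair of facts $\cI_{Z_1}\cdots\cI_{Z_r} \subseteq \cI_Z$ and $\cI_Z^N \cF = 0$, both of which rest on Nullstellensatz-type arguments in coordinate charts (exactly as in the paragraph just preceding Lemma 1) together with the reducedness of the $\cI_{Z_i}$; granted these, the rest is a straightforward devissage by an ideal filtration.
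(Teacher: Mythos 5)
Your proof is correct, but it takes a genuinely different route from the paper's. The paper reduces to showing $\oplus_i G(Z_i) \to G(Z)$ is surjective via Lemma 1, then inducts on the number of irreducible components: for each $\cF$ it peels off one component $Z_N$ through the surjection $\cF \to i_{Z_N*}i_{Z_N}^*\cF$, checks surjectivity on stalks, and argues that the kernel is supported on the union of the remaining components because the map is an isomorphism over $Z_N \setminus \bigcup_{i\le N-1} Z_i$. You instead perform a direct dévissage on $X$ by the ideal filtration $\cF_k = \cI_{Z_{j_k}}\cF_{k-1}$, using $\cI_{Z_1}\cdots\cI_{Z_r} \subseteq \bigcap_i \cI_{Z_i} = \cI_Z$ and $\cI_Z^N\cF = 0$ to guarantee termination; this parallels the filtration argument already used in the proof of Lemma 1 and avoids both the induction and the support analysis of the kernel, since each graded piece is annihilated by a single $\cI_{Z_{j_k}}$ and is therefore literally a pushforward from $Z_{j_k}$ (so you barely need Lemma 1 at all, only to match the statement's identification $G_{Z_i}(X)\simeq G(Z_i)$). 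The ingredients you flag — finiteness of the set of components (from compactness), $\cI_Z = \bigcap_i\cI_{Z_i}$ for the reduced structure, and the existence of $N$ with $\cI_Z^N\cF = 0$ — are all either standard or established in the paragraph preceding Lemma 1, so the argument is complete. The paper's version yields a slightly more geometric decomposition ($\cF|_{Z_N}$ plus a remainder on $Z'$), but yours is more uniform and arguably cleaner.
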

\begin{proof}
By Lemma 1, it is enough to show that the natural morphism $\oplus_i G(Z_i) \to G(Z)$ is surjective.
We proceed by induction on the number $N$ of the components of $Z$.
Let $Z':=\cup_{i <N} Z_i$.
It is enough to show that $G_{Z'}(Z) \oplus G(Z_N) \to G(Z)$ is surjective.
Let $\cF \in G(Z)$.
Let $\cG$ be the kernel of the natural morphism
$$0 \to \cG \to \cF \to i_{Z_N*}i^*_{Z_N} \cF \to 0.$$
In fact, 
since $Z_N$ is closed, it is easy to see for $z \in Z_N$, $\cF_z \to (i_{Z_N*}i^{-1}_{Z_N} \cF)_z$ is surjective.
On the other hand, 
since $i^{-1}_{Z_N} \cO_X \to \cO_{Z_N}$ is surjective and $i_{Z_N*}$ is right exact,
$$i_{Z_N*}i^{-1}_{Z_N} \cF \to i_{Z_N*}i^{*}_{Z_N} \cF =i_{Z_N*}(i^{-1}_{Z_N} \cF \otimes_{i^{-1}_{Z_N} \cO_X} \cO_{Z_N}) $$
is surjective.
Thus for $z \in Z_N$, $\cF_z \to (i_{Z_N*}i^{*}_{Z_N} \cF)_z$ is surjective.
The surjection outside $Z_N$ is trivial since it is a 0 map.
Over $Z_N \setminus \cup_{i \leq N-1} Z_i$, $i_{Z_N}$ is locally isomorphic, thus $\cF \to i_{Z_N*}i^*_{Z_N} \cF$ is isomorphic.
Thus $\cG$ is supported in $Z'$ and $G_{Z'}(Z) \oplus G(Z_N) \to G(Z)$ is surjective.
(More precisely, by Lemma 65.14.1 \cite{St}, using the adjunction theory, $i_{Z_N*}i^{-1}_{Z_N} \cF=\cF \otimes_{\cO_X} i_{Z_N*} \cO_{Z_N}$. However, we do not need this fact here.)
\end{proof}
\begin{mythm}
Let $\cF$ be a torsion sheaf over a compact irreducible reduced complex space $X$ of dimension $n$.
Assume that Theorem 3 holds for any compact irreducible reduced complex space of dimension strictly less than $n$.
Let $r$ be the dimension of the support of $\cF$.
Let $L$ be a holomorphic line bundle over $X$.
Then we have for any $p$
$$h^p(X, \cF \otimes L^m)=O(m^r).$$
\end{mythm}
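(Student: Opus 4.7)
Let $Z = \mathrm{supp}(\cF)$ with reduced structure. Since $\cF$ is torsion on the irreducible reduced space $X$, $Z$ is a proper closed analytic subset and $r = \dim Z < n$. The strategy is to filter $\cF$ into sheaves pushed forward from the irreducible components of $Z$, and then invoke the inductive hypothesis (Theorem 3 in dimension strictly less than $n$) on each component.

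The first reduction uses Lemma 1: the filtration $F^i \cF = \cI_Z^i \cF$ is finite, and every graded piece $F^i \cF / F^{i+1} \cF$ lies in $i_{Z*}\coh(Z)$. Since the long exact sequence attached to $0 \to A \to B \to C \to 0$ yields the subadditivity $h^p(B) \leq h^p(A) + h^p(C)$, it suffices to prove the estimate when $\cF = i_{Z*}\cG$ for some $\cG \in \coh(Z)$. The second reduction extracts from the proof of Lemma 2 a genuine short exact sequence
$$0 \to \cG' \to \cG \to i_{Z_N*}i^*_{Z_N}\cG \to 0,$$
with $\cG'$ supported on $Z' = \cup_{i<N} Z_i$. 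Iterating this construction (peeling off one irreducible component at a time, and reapplying Lemma 1 to each resulting kernel to bring it back into pushed-forward form) produces, after finitely many steps, a finite filtration of $\cG$ whose graded pieces have the shape $i_{Z_j*}\cS_j$ for coherent sheaves $\cS_j$ on the irreducible components $Z_j$ of $Z$. By subadditivity once more, it suffices to bound $h^p(X, i_{Z_j*}\cS_j \otimes L^m)$ for each such piece.

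Because $i_{Z_j}$ is a closed immersion, $i_{Z_j*}$ is exact and the projection formula gives $L^m \otimes i_{Z_j*}\cS_j \cong i_{Z_j*}(\cS_j \otimes (L|_{Z_j})^m)$, so Leray yields
$$h^p(X, i_{Z_j*}\cS_j \otimes L^m) = h^p(Z_j, \cS_j \otimes (L|_{Z_j})^m).$$
Each $Z_j$ is a compact irreducible reduced complex space of dimension $\dim Z_j \leq r < n$, so the inductive hypothesis applies: Theorem 3 on $Z_j$ provides a polynomial upper bound $h^p(Z_j, \cS_j \otimes (L|_{Z_j})^m) = O(m^{\dim Z_j}) = O(m^r)$, which is the required estimate.

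The main delicate point is the second reduction: turning the Grothendieck-group statement of Lemma 2 into an honest iterated filtration of $\cG$ by push-forwards from the $Z_j$. This is not deep, as the proof of Lemma 2 already furnishes the required short exact sequence, but some bookkeeping is needed to check that the alternation between Lemma 1's $\cI_Z$-adic filtration step and Lemma 2's peeling step terminates with graded pieces of exactly the desired form.
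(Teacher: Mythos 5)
Your proposal is correct and follows essentially the same route as the paper: reduce via the dévissage filtration of Lemma~1 and the component-peeling exact sequence from the proof of Lemma~2 to sheaves pushed forward from the irreducible components of the support, then apply the projection formula together with the inductive hypothesis on each component, whose dimension is at most $r<n$. Your explicit remark that one must upgrade the Grothendieck-group identities to honest short exact sequences (so that the subadditivity $h^p(\cF)\leq h^p(\cS)+h^p(\cQ)$ applies) is exactly the bookkeeping the paper's terser ``by Lemma 2, without loss of generality'' is implicitly relying on.
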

\begin{proof}
If we have an exact sequence of torsion sheaf
$$0 \to \cS \to \cF \to \cQ \to 0,$$
the conclusion of the theorem for $\cS$, $\cQ$ implies the conclusion for $\cF$.
In particular, by Lemma 2, without loss of generality, we may assume that the support of $\cF$ is irreducible.
Let $Z$ be the support of $\cF$ and $i: Z \to X$ be the closed immersion.
By Lemma 1, without loss of generality, we can assume that $\cF=i_* \cG$ for some coherent sheaf on $Z$.
Thus it is enough to prove that
for any $p$
$$h^p(Z, \cG \otimes L|_Z^m)=O(m^r).$$
Since the dimension of $Z$ is strictly less than $n$ (see, e.g. Proposition 4.26 Chap. II \cite{agbook}), this is a consequence of holomorphic Morse inequalities provided by Theorem 3.
\end{proof}
With this theorem, it is easy to deduce the holomorphic Morse inequalities by induction on the dimension.
\begin{mythm}
Let $X$ be a compact irreducible reduced complex space of dimension $n$.
Let $\cF$ be a coherent sheaf of rank $r$, and $L$ a holomorphic line bundle with a smooth Hermitian metric on $X$.
Let us denote by $X(q),0 \leq q \leq n$, the open subset of points of $x \in X_{\reg}$ that are of index $q$, i.e. points $x$ at which the $(1,1)$ Chern curvature form $ic_1(L)(x)$ has exactly $q$ negative and $n-q$ positive eigenvalues.
We also put $$X(\leq q)=X(0) \cup X(1) \cup \cdots \cup X(q).$$
For all degrees $q= 0,1, \cdots,n$, the following asymptotic inequalities hold when $m$ tends to $+\infty$.
\begin{enumerate}
\item Morse inequalities:
$$\dim H^q(X,L^m \otimes \cF )\leq r \frac{m^n}{ n !} \int_{X(q)}(-1)^q(i2 \pi c_1(L))^n+o(m^n).$$
\item Strong Morse inequalities:
$$\sum_{j=0}^q (-1)^{q-j} \dim H^j(X,L^m \otimes \cF)\leq r \frac{m^n}{ n !} \int_{X(\leq q)}(-1)^q(i2 \pi c_1(L))^n+o(m^n).$$
\item Asymptotic Riemann-Roch formula:
$$\sum_{j=0}^n (-1)^{q-j} \dim H^j(X,L^m \otimes \cF)= r \frac{m^n}{ n !} \int_{X}(-1)^q(i2 \pi c_1(L))^n+o(m^n).$$
\end{enumerate}
\end{mythm}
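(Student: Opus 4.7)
The plan is to induct on $n = \dim X$, so that the inductive hypothesis makes Theorem 2 available in dimension $n$. The base case $n = 0$ is trivial; granting the statement in all smaller dimensions, I aim to reduce the theorem to Demailly's original Theorem 1 on a smooth manifold with a holomorphic vector bundle, controlling every ``error'' by Theorem 2. First I form $0 \to \tors(\cF) \to \cF \to \cF/\tors(\cF) \to 0$. The torsion subsheaf has rank $0$ and support of dimension at most $n-1$, so Theorem 2 yields $h^j(X, \tors(\cF) \otimes L^m) = O(m^{n-1}) = o(m^n)$ for every $j$. The standard algebraic Morse-type inequality for long exact sequences in cohomology then shows that the strong Morse inequality for $\cF$ follows from that for $\cF/\tors(\cF)$, so I may assume $\cF$ torsion-free of generic rank $r$.

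The next step is to pass to a smooth manifold on which $\cF$ becomes locally free. Using Hironaka's resolution of singularities together with his flattening theorem, I choose a bimeromorphic modification $\pi : \tilde X \to X$ with $\tilde X$ a compact complex manifold of dimension $n$, such that the strict transform $\tilde \cF := \pi^*\cF / \tors$ is locally free of rank $r$ on $\tilde X$. Let $E \subset \tilde X$ be the exceptional set and $S := \Sing X \cup \pi(E)$, a closed analytic subset of $X$ of dimension $<n$. Because $\pi$ restricts to a biholomorphism off $\pi^{-1}(S)$, the higher direct images $R^i\pi_*(\tilde\cF \otimes \pi^*L^m)$ for $i \geq 1$ are coherent sheaves on $X$ supported in $S$, and the natural map $\cF \to \pi_*\tilde\cF$ is an isomorphism outside $S$, so its kernel and cokernel are also supported in $S$. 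By the inductive hypothesis all of these sheaves contribute $O(m^{n-1})$ to cohomology after twisting by $L^m$. Running the Leray spectral sequence for $\pi$ together with the short exact sequences involving the kernel and cokernel of $\cF \to \pi_*\tilde\cF$, the alternating sum $\sum_{j=0}^q (-1)^{q-j} h^j(X, \cF \otimes L^m)$ is bounded above by $\sum_{j=0}^q (-1)^{q-j} h^j(\tilde X, \tilde\cF \otimes \pi^*L^m) + o(m^n)$.

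At this point Theorem 1 applies verbatim to the manifold $\tilde X$, the vector bundle $\tilde\cF$ of rank $r$, and the smoothly metrised line bundle $\pi^*L$, bounding the latter quantity by $r \frac{m^n}{n!} \int_{\tilde X(\leq q)} (-1)^q (i2\pi c_1(\pi^*L))^n + o(m^n)$. Since $\pi$ is a biholomorphism between $\tilde X \setminus \pi^{-1}(S)$ and $X_\reg \setminus S$ which pulls back the curvature form of $L$, and $S$ has Lebesgue measure zero in $X_\reg$ with respect to the top form $(ic_1(L))^n$, the integral equals $\int_{X(\leq q)}(-1)^q(i2\pi c_1(L))^n$; this gives the strong Morse inequality. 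The ordinary Morse inequality and the asymptotic Riemann--Roch formula then follow by the standard elementary manipulations of alternating sums. The main obstacle is the simultaneous desingularization and flattening of the second stage, requiring Hironaka's theorems in the complex-analytic category to produce a single model $\tilde X$ on which both $X$ is smooth and $\cF$ is locally free; once this is in place, the remaining book-keeping via Theorem 2 is routine, although one must check carefully that all relevant long exact sequences and spectral sequence filtrations introduce only terms of size $O(m^{n-1})$ and that no alternating sum spoils the final bound.
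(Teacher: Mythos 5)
Your proof follows essentially the same route as the paper: induction on dimension with the base case $n=0$, reduction to the torsion-free case via Theorem 2, a modification making $\pi^*\cF/\tors$ locally free (the paper cites Rossi where you invoke Hironaka), control of the higher direct images and of the kernel/cokernel of $\cF \to \pi_*(\pi^*\cF/\tors)$ by Theorem 2, the Leray spectral sequence, and finally Demailly's Theorem 1 upstairs. The only point you gloss over that the paper treats in detail is the justification that the Leray spectral sequence degenerates at a step independent of $m$ (via topological-dimension bounds on $R^q\pi_*$ and a finite Stein cover), but you correctly flag this as the bookkeeping to be checked.
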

\begin{proof}
Since the asymptotic Riemann-Roch formula and Morse inequalities can be deduced from the strong Morse inequalities, we can concentrate on its proof.
The proof is done by induction on the dimension $n$.
If $n=0$, the conclusion is trivial.
Assume that the theorem holds for any compact irreducible complex space of dimension strictly less than $n$.

Notice that we have an exact sequence
$$0 \to \cF_\tors \to \cF \to \cF/ \cF_\tors \to 0.$$
On the other hand, by Theorem 2 and the induction condition, we have that for any $p$
$$\dim H^p(X, \cF_\tors \otimes L^m)=o(m^n).$$
Without loss of generality, we can thus assume that $\cF$ is torsion-free.

Let $\pi: \tilde{X} \to X$ be a modification such that $\pi^* \cF/ \tors$ is locally free (where $\tors$ is the torsion part) by \cite{Ros}.
By projection formula, for any $i \in \N$, we have that
$$R^i \pi_*(\pi^* \cF/ \tors \otimes \pi^* L^m)=R^i \pi_*(\pi^* \cF/ \tors) \otimes  L^m.$$
On the other hand, we have natural morphisms 
$$\cF \to \pi_* \pi^*  \cF \to \pi_*(\pi^* \cF/ \tors) $$
which is generically isomorphic.
Applying Theorem 2 to the kernel and the cokernel of the composition map, we have for any $p$,
$$h^p(X, \cF \otimes L^m)=h^p(X, \pi_*(\pi^* \cF/ \tors \otimes \pi^* L^m))+o(m^n).$$
By Leray spectral sequence, we have that
$$E_2^{p,q}=H^p(X, R^q \pi_*(\pi^* \cF/ \tors \otimes \pi^* L^m))\Rightarrow H^{p+q}(\tilde{X}, \pi^* \cF/ \tors \otimes \pi^* L^m).$$
Recall that if a topological space $X$ is metrizable and locally homeomorphic to a subspace of $\R^{2n}$, then the topological dimension of $X$ is less than $2n$ which is by definition the smallest number such that any sheaf cohomology of strictly larger degree of any sheaf of abelian groups is trivial (cf. Theorem 13.13 Chap IV. \cite{agbook}). 
Notice that for any open set $U$ in $X$, $\pi^{-1}(U)$ is metrizable as an open subset of a compact manifold. 
Therefore, for any $x \in X$, $R^q \pi_*(\pi^* \cF/ \tors \otimes \pi^* L^m)_x=\varinjlim_{x \in U} H^q(\pi^{-1}(U),\pi^* \cF/ \tors \otimes \pi^* L^m)=0$ for $q>2n$ since the topological dimension of $\pi^{-1}(U)$ is less than $2n$.
Cover $X$ by $N$ finite Stein spaces $U_\alpha$ such that for any $q$, the direct image of $R^q \pi_*(\pi^* \cF/ \tors )$ under the closed embedding of $U_\alpha$ in some euclidean space admits a resolution by vector bundles over $U_\alpha$ of finite length.
By
lemma 4.4 Chap IX \cite{agbook},
we have that for any $q$ and any $\alpha$,
$H^p(U_\alpha,R^q \pi_*(\pi^* \cF/ \tors \otimes \pi^* L^m))=0$
for any $p>0$.
In particular, $H^p(X,R^q \pi_*(\pi^* \cF/ \tors \otimes \pi^* L^m))$ can be calculated as the \u{C}ech cohomology associated to the cover $U_\alpha$.
In particular, $H^p(X,R^q \pi_*(\pi^* \cF/ \tors \otimes \pi^* L^m))=0$ for $p<0$ or $p>N$ or $q<0$ or $q>2n$ for some $N$ independent of $m$ (depending only on the cover).
Thus the Leray spectral sequence will degenerate at most step $\max(N, 2n)$. 

Notice that by strong Morse inequalities, we have at the limit of the spectral sequence
$$\sum_{j=0}^q \dim H^{j}(\tilde{X}, \pi^* \cF/ \tors \otimes \pi^* L^m)\leq r \frac{m^n}{ n !} \int_{X(\leq q)}(-1)^q(i2 \pi c_1(L))^n+o(m^n).$$
Since for $q>0$, $R^q \pi_*( \pi^* \cF/ \tors)$ is a torsion sheaf,
$\dim E_2^{p,q}=o(m^n)$ by Theorem 2.
Thus we also have that for $q>0$ and any $r \geq 2$
$$\dim E_\infty^{p,q} \leq \dim E_r^{p,q} \leq \dim E_2^{p,q}=o(m^n).$$
On the other hand, since
$$\oplus_{p+q=j} E_{\infty}^{p,q}=H^{j}(\tilde{X}, \pi^* \cF/ \tors \otimes \pi^* L^m),$$
we have that 
$$\sum_{j=0}^q \dim E_\infty^{j,0}\leq r \frac{m^n}{ n !} \int_{X(\leq q)}(-1)^q(i2 \pi c_1(L))^n+o(m^n).$$
By calculation of spectral sequence, for any $r,p,q$, 
$$\dim E_r^{p,q} - \dim E_r^{p+r,q-r+1}-\dim E_r^{p-r,q+r-1} \leq \dim E_{r+1}^{p,q} \leq \dim E_r^{p,q}.$$
Since the spectral sequence degenerates at a finite step, which is independent of $m$, we have in terms of $m$
$$ \dim E_2^{p,0}=\dim E_\infty^{p,0}+o(m^n).$$
This implies that 
$$\sum_{j=0}^q \dim H^j(X, \pi_*(\pi^* \cF / \tors)\otimes L^m)\leq r \frac{m^n}{ n !} \int_{X(\leq q)}(-1)^q(i2 \pi c_1(L))^n+o(m^n)$$
which finishes the proof.
To conclude the asymptotic Riemann-Roch formula, we can use the above arguments to reduce to the locally free case.
In the case when $X$ is smooth, the asymptotic Riemann-Roch formula tensoring with a coherent sheaf is a consequence of
the Riemann-Roch-Grothendieck formula (more precisely, Hirzebruch–Riemann–Roch formula) proven by \cite{TT76}.
(In fact, the Riemann-Roch-Grothendieck formula holds for rational Deligne, or rational Bott-Chern cohomology by \cite{Gri} or \cite{Wu20}.)
\end{proof}
As a direct application, we have the following proposition.
\begin{myprop}(Siegel inequality)
Let $X$ be a compact irreducible reduced complex space of dimension $n$.
Let $L$ be a holomorphic line bundle on $X$.
The Kodaira-Iitaka dimension satisfies
$$\limsup_{n \to \infty} \frac{\log h^0(X, L^m) }{\log(m)} \leq n.$$
More generally for any $q \geq 0,$
$$\limsup_{n \to \infty} \frac{\log h^q(X, L^m) }{\log(m)} \leq n.$$
\end{myprop}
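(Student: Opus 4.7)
The plan is to derive the proposition as an immediate consequence of the Morse inequality in Theorem 3, applied to the structure sheaf $\cF = \cO_X$.

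First I would fix any smooth Hermitian metric on $L$; this exists on the compact complex space $X$ by a standard partition-of-unity argument. Since $X$ is irreducible and reduced, $\cO_X$ is a coherent sheaf of rank $r = 1$, so Theorem 3 applies with $\cF = \cO_X$. The crucial observation is that for each fixed $q$, the quantity
$$I_q := \int_{X(q)} (-1)^q (i 2\pi c_1(L))^n$$
is a finite constant independent of $m$: finite because $X(q) \subset X_{\reg}$ and the chosen curvature form is smooth and bounded on the compact space, and nonnegative because $(-1)^q (ic_1(L))^n$ has the prescribed sign on $X(q)$ by the very definition of the index.

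Applying item 1 of Theorem 3 with $r = 1$ then yields
$$h^q(X, L^m) \leq \frac{m^n}{n!} I_q + o(m^n) = O(m^n).$$
Taking logarithms and dividing by $\log m$ gives
$$\limsup_{m \to \infty} \frac{\log h^q(X, L^m)}{\log m} \leq n,$$
which is exactly the claimed bound; the case $q = 0$ recovers the Kodaira--Iitaka dimension statement as a special case.

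I do not expect a serious obstacle here: the proposition is essentially Theorem 3 rephrased on the logarithmic scale, where only the order of growth matters and the precise leading coefficient becomes irrelevant. The only points to verify are that $\cO_X$ has rank $1$ as a coherent sheaf (using irreducibility and reducedness of $X$) and that $I_q$ is finite, both of which are routine.
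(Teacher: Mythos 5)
Your proposal is correct and matches the paper's approach: the paper states this proposition as "a direct application" of Theorem 3 with no further argument, and your derivation (apply the Morse inequality with $\cF=\cO_X$, $r=1$, to get $h^q(X,L^m)=O(m^n)$, then pass to the logarithmic scale) is exactly the intended one. The two routine verifications you flag (existence of a smooth metric on $L$ over the compact space, finiteness of the curvature integral over $X(q)\subset X_{\reg}$) are indeed all that needs checking.
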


An interesting application is a sufficient condition to the Grauert-Riemenschneider conjecture over singular spaces.

\textbf{ Grauert-Riemenschneider conjecture.} If a compact complex manifold $X$ possesses a smooth Hermitian line
bundle, which is semi-positive everywhere and positive on a dense open set, then
$X$ is Moishezon.

Analogous to Demailly's criterion, we have the following result.
\begin{myprop}
Let $X$ be a compact irreducible reduced complex space of dimension $n$.
Let $L$ be a holomorphic line bundle with a smooth Hermitian metric on $X$.
Then 
$X$ is Moishezon (i.e. there exist $n$ algebraically independent meromorphic functions) 
and $L$ is a big line bundle if
$$\int_{X(\leq 1)}(-1)^q(i2 \pi c_1(L))^n >0.$$
\end{myprop}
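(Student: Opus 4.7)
The plan is to apply the strong Morse inequality of Theorem 3 with $\cF = \cO_X$ (so $r = 1$) in degree $q = 1$. Unpacking that case and multiplying through by $-1$ yields
$$h^0(X, L^m) - h^1(X, L^m) \geq \frac{m^n}{n!}\int_{X(\leq 1)}(i 2 \pi c_1(L))^n + o(m^n),$$
which is exactly the quantity whose positivity is assumed in the proposition (after absorbing the sign $(-1)^q$ from the statement of Theorem 3 into the left-hand side). Consequently $h^0(X, L^m) \geq c\, m^n$ for some constant $c > 0$ and all $m \gg 0$. Combined with the Siegel inequality of Proposition 1, this pins the Kodaira–Iitaka dimension of $L$ at exactly $n$, which is by definition what it means for $L$ to be a big line bundle on $X$.

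For the Moishezon conclusion, I would pass to a resolution of singularities $\pi\colon \tilde X \to X$, which is available in the analytic category, and consider $\pi^* L$ on the smooth compact complex manifold $\tilde X$. Since $\pi$ is bimeromorphic, pullback of global sections gives an injection $H^0(X, L^m) \hookrightarrow H^0(\tilde X, (\pi^* L)^m)$ for every $m$, so the lower bound just obtained transfers to $h^0(\tilde X, (\pi^* L)^m) \gtrsim m^n$. Thus $\pi^* L$ is big on $\tilde X$, and the classical theorem of Siegel for compact complex manifolds (bigness of a line bundle forces the transcendence degree of the meromorphic function field to reach the dimension) implies $\tilde X$ is Moishezon. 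Because a bimeromorphic map induces an isomorphism of fields of meromorphic functions, $X$ itself is Moishezon.

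The only point requiring real care is this transfer between $X$ and the resolution $\tilde X$: once one has verified that pullback preserves nonzero sections (so that bigness ascends along $\pi$) and that bimeromorphic maps preserve the meromorphic function field (so Moishezonness descends), everything reduces to the smooth case treated by Siegel and Demailly. All the serious analytic input has already been absorbed into Theorem 3, so no new curvature or $L^2$ estimate is needed here; the residual obstacle is essentially bookkeeping around singularities and the choice of resolution.
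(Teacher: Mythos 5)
Your proposal is correct and follows essentially the same route as the paper: strong Morse inequalities in degree $q=1$ to produce $h^0(X,L^m)\gtrsim m^n$, a resolution of singularities, and bimeromorphic invariance of the meromorphic function field to reduce the Moishezon statement to the smooth case. If anything, your argument is more complete than the paper's sketch, since deducing bigness of $L$ on $X$ directly from Theorem 3 (rather than via the projection formula, which the paper notes only applies when $X$ is normal) uniformly covers the non-normal case, and you correctly read through the sign ambiguity in the paper's statement of the hypothesis.
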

Note first that since the meromorphic function field is a bimeromorphic invariant.
Siegel-Remmert-Thimm theorem implies that the algebraic dimension is at most $n$.
Since $\pi^* L$ is big by assumption, the algebraic dimension of $\mathcal{M}(X) \simeq \mathcal{M}(\tilde{X})$ is $n$.

Notice that if $X$ is moreover normal, there exists a modification $\pi: \tilde{X} \to X$ such that $\tilde{X}$ is smooth and $\pi_* \cO_{\tilde{X}}=\cO_X$.
Then the fact that $L$ is big is a direct consequence of the projection formula.
Here we do not assume that $X$ is necessarily normal.

Note that Theorem 2 and 3 are new even if $X$ is smooth according to the knowledge of the author.

Bonavero has generalized the holomorphic Morse inequalities to the line bundle with a singular metric with analytic singularities tensoring a fixed vector bundle.
In the following, we discuss the difficulties of our approach when trying to prove the case tensoring a fixed coherent sheaf.
We are able to prove the holomorphic Morse inequalities in some very special cases.
Then we discuss a general approach provided by Demailly.

Let us recall the following definitions.
\begin{mydef} {\rm (Psh~/~quasi-psh functions)}

Let $X$ be a complex manifold (not necessary compact). We say that $\varphi$ is a psh function (resp. a quasi-psh function) on $X$, if $i \d \dbar \varphi \geq 0$,(resp. $i \d \dbar \varphi \geq  \alpha$) in the sense of currents where $\alpha$ is some smooth form on $X$.

We say that a quasi-psh function $\varphi$ has analytic singularities, if locally  $\varphi$ is of the form
$$\varphi(z) =c \log(\sum_i |g_i|^2) +C^\infty$$
with $c >0$ and $(g_i)$ some local holomorphic functions.
Here $C^\infty$ means a local smooth function.
Define $\cJ$ as the integral closure of $(g_i)$, which is globally defined on $X$.

If $X$ is a complex space, a quasi-psh function with analytic singularities is the local restriction of a quasi-psh function with analytic singularities via a local embedding of $X$ into some open set of some Euclidean space.
\end{mydef}
\begin{mydef} {\rm (Multiplier ideal sheaf)}.
Let $\varphi$ be a quasi-psh function over a complex manifold $X$. The multiplier ideal sheaves $\cI(\varphi)$ is defined as
$$\cI(\varphi)_x=\{f \in \cO_{X,x}|\exists U_x,\int_{U_x} |f|^2 e^{-2\varphi} < \infty \}$$
where $U_x$ is some open neighbourhood of $x$ in $X$.
\end{mydef}
A basic property of the multiplier ideal sheaf due to \cite{Nad90} is that it is always a coherent ideal sheaf.
We have the following consequence of the Skoda division theorem well known for experts.
\begin{mylem}
Assume that $\varphi=c \log(\sum_{i=1}^k |g_i|^2)+O(1)$ with $c>0$, $g_i \in \cO(\Omega)$ and $\Omega$ open in $\C^N$. 
Denote $\cI:=(g_1, \cdots ,g_k)$.
Then we have for $m$ sufficient large that
$$\cI^{\lfloor \frac{m}{c} \rfloor+1} \subset \cI(m \varphi) \subset \cI^{\lceil \frac{m}{c} -\min\{N,k-1\} \rceil -1}. $$
In particular, for $m$ large enough,
$$V(\cI(m\varphi))=V(g_1, \cdots, g_k)=\Sing(\varphi).$$
\end{mylem}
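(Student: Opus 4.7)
The plan is to prove the two inclusions separately: the lower bound by elementary pointwise estimation, the upper bound by iterating Skoda's $L^2$ division theorem, and then to deduce the ``In particular'' equality of vanishing loci from Hilbert's Nullstellensatz.

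For the first inclusion $\cI^{\lfloor m/c\rfloor+1}\subset\cI(m\varphi)$, I would take $f\in\cI^{\lfloor m/c\rfloor+1}$, write it locally as a sum of $(\lfloor m/c\rfloor+1)$-fold monomial products of the generators $g_i$, and apply the pointwise bound $|g^\alpha|^2\le(\sum_i|g_i|^2)^{|\alpha|}$. Combined with the defining asymptotics $e^{-2m\varphi}=O((\sum_i|g_i|^2)^{-2mc})$ up to the bounded $O(1)$ factor, this produces enough vanishing of $|f|^2$ along $V(\cI)$ to dominate the singularity of the weight, so local integrability of $|f|^2e^{-2m\varphi}$ follows by direct computation.

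For the second inclusion $\cI(m\varphi)\subset\cI^{\lceil m/c-\min(N,k-1)\rceil-1}$, the key ingredient is Skoda's $L^2$ division theorem: an integrability condition of the form $\int|f|^2(\sum_i|g_i|^2)^{-\alpha}\,e^{-\psi}<\infty$, with $\alpha$ exceeding the dimensional threshold $\min(N,k-1)$, permits writing $f=\sum_i g_ih_i$ with the $h_i$ satisfying the analogous integrability with $\alpha$ decreased by one. Starting from $f\in\cI(m\varphi)$ on a relatively compact Stein neighbourhood, I would iterate this division, at each step absorbing one generator of $\cI$ into the decomposition and lowering the effective exponent in the weight, and continue as long as the remaining exponent stays strictly above the threshold $\min(N,k-1)$. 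After $s$ such iterations one obtains $f\in\cI^s$, and counting the admissible iterations gives precisely $s=\lceil m/c-\min(N,k-1)\rceil-1$.

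The ``In particular'' conclusion is then immediate: once $m$ is large enough that both exponents $\lfloor m/c\rfloor+1$ and $\lceil m/c-\min(N,k-1)\rceil-1$ are strictly positive, the two-sided inclusion forces the radicals to coincide, $\sqrt{\cI(m\varphi)}=\sqrt{\cI}$, and hence $V(\cI(m\varphi))=V(g_1,\ldots,g_k)$ by the Nullstellensatz; the equality with $\Sing(\varphi)$ is clear since $\varphi$ is smooth exactly on the complement of the common zero set of the $g_i$. The main technical obstacle is the bookkeeping in the Skoda iteration: one must track the precise loss of exponent at each step, confirm that the updated integrability condition still allows a further application of Skoda, and finally convert the iterated holomorphic decomposition into genuine membership of $f$ in the ideal power $\cI^s$, as opposed to merely an integral bound.
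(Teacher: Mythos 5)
Your proposal is correct and follows essentially the same route as the paper: the lower inclusion by direct pointwise/integrability estimation (which the paper simply calls trivial by definition), the upper inclusion by iterating Skoda's division theorem while tracking the loss of exponent against the threshold $\min\{N,k-1\}$, and the equality of zero sets as an immediate consequence. The only difference is that you spell out the first inclusion and the Nullstellensatz step in more detail than the paper does.
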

\begin{proof}
The first inclusion is trivial by definition.
Let $f \in \cO_{\Omega,x}$ such that there exists $x \in U$ an open neighborhood with
$$\int_U \frac{|f|^2}{\sum_i |g_i|^{\frac{2m}{c}}} < \infty.$$
In other words let $f \in \cI(m \varphi)_x$.
Then by Skoda division theorem, for $\frac{m}{c}> \min\{N, k-1\}+1$,
there exist holomorphic functions $h_i$ on $U$ such that $f=\sum g_j h_j$ and for any $j$
$$\int_U \frac{|h_j|^2}{\sum_i |g_i|^{\frac{2m}{c}-2}} < \infty.$$
If $\frac{m}{c}> \min\{N, k-1\}+2$, we can apply again the Skoda division theorem for each $h_j$.
By induction, we can see that there exist holomorphic functions $h_\alpha$ on $U$ such that $f=\sum g^\alpha h_\alpha$ for multi-index $\alpha$ such that 
$$|\alpha| \leq \lceil \frac{m}{c} -\min\{N,k-1\} \rceil -1.$$
This finishes the proof of the inverse inclusion since the index is uniform for any $x \in \Omega$.
\end{proof}
\begin{mycor}
Let $X$ be a compact complex manifold, and $\varphi$ be a quasi psh function on $X$ with analytic singularities.
Then for any $N>0$, there exists  $m>0$  such that
$$\cI(m \varphi)\subset\cI_{\Sing(\varphi)}^{N}  . $$
\end{mycor}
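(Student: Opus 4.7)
The plan is to reduce the global statement to the local estimate of Lemma 3 by a finite cover argument, combined with the Nullstellensatz to pass from the ideal of local generators to the radical ideal $\cI_{\Sing(\varphi)}$.

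First, I would cover $X$ by finitely many open sets $U_\alpha$, each biholomorphic to an open subset of $\C^n$ (with $n = \dim X$), on which $\varphi$ has a representation
$$\varphi = c\log\sum_{i=1}^{k_\alpha}|g_i^\alpha|^2 + h_\alpha,$$
with $h_\alpha$ smooth and the constant $c>0$ the same on every chart (this is part of the definition of analytic singularities). Set $\cI_\alpha := (g_1^\alpha,\dots,g_{k_\alpha}^\alpha)$. Since the smooth perturbation $h_\alpha$ is bounded on any relatively compact subset and the multiplier ideal sheaf is defined locally, Lemma 3 applies to $\varphi|_{U_\alpha}$ (with ambient dimension $n$ and number of generators $k_\alpha$).

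Next, Lemma 3 yields, for $m$ sufficiently large,
$$\cI(m\varphi)|_{U_\alpha} \subset \cI_\alpha^{\lceil m/c-\min\{n,k_\alpha-1\}\rceil - 1}.$$
Because the cover is finite and $\min\{n,k_\alpha-1\}\le n$ uniformly, a single threshold — for instance any $m \ge c(N+n+1)$ — forces the exponent on the right-hand side to be at least $N$ on every chart simultaneously. On the other hand, the vanishing locus $V(\cI_\alpha)$ equals $\Sing(\varphi)\cap U_\alpha$, so Hilbert's Nullstellensatz gives $\sqrt{\cI_\alpha} = \cI_{\Sing(\varphi)}|_{U_\alpha}$, hence $\cI_\alpha \subset \cI_{\Sing(\varphi)}|_{U_\alpha}$ and therefore $\cI_\alpha^N \subset \cI_{\Sing(\varphi)}^N|_{U_\alpha}$. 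Combining the two inclusions yields
$$\cI(m\varphi)|_{U_\alpha} \subset \cI_{\Sing(\varphi)}^N|_{U_\alpha}$$
for every $\alpha$ once $m \ge c(N+n+1)$, and since this is a local statement on an open cover, it holds globally on $X$.

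The proof is essentially immediate from Lemma 3; there is no real obstacle beyond organizing the uniformity across the cover. The only points worth highlighting are that the Lelong-type constant $c$ is part of the definition and is a single global constant (so one does not need to take a maximum of different $c_\alpha$'s), and that the compactness of $X$ reduces the global inclusion to finitely many local inclusions, each of which is controlled by the Skoda-based estimate in Lemma 3 and the Nullstellensatz passage from $\cI_\alpha$ to its radical.
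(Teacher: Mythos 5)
Your proposal is correct and follows essentially the same route as the paper: apply Lemma 3 on each chart of a finite cover, note that the local generators lie in $\cI_{\Sing(\varphi)}$ since their common zero set is $\Sing(\varphi)$, and use compactness to choose $m$ uniformly. (The appeal to the Nullstellensatz is harmless but unnecessary for the inclusion $\cI_\alpha \subset \cI_{\Sing(\varphi)}$, which follows directly from each generator vanishing on $V(\cI_\alpha)$.)
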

\begin{proof}
Since $V(g_1, \cdots, g_k)=\Sing(\varphi)$ in each local coordinate, 
$(g_1, \cdots, g_k) \subset \cI_{\Sing(\varphi)}$.
Then by the above lemma, the conclusion holds on each local coordinate chart.
Since $X$ is compact, $m$ can be uniform for the finite coordinate charts covering $X$.
\end{proof}
By the above corollary, if the coherent sheaf $\cF$ is torsion and supported in the singular set of the metric with analytic singularities,
for $m >0$ large enough,
$$\cF \otimes \cI(m \varphi)=0.$$
In this case, we have Morse inequalities trivially.
More generally, to study the cohomology group of $\cF \otimes \cI(m \varphi)$ using a resolution of singularities,
however, we need to study the tensor product $\cF \otimes^L \cI(m \varphi)$ in the derived category to calculate the Leray spectral sequence, which is much more complicated as shown in the next example.
\begin{myex}
{\em 
First, let us recall the following algebraic result due to \cite{Kod93}.
Let $R$ be a Noetherian local ring with maximal ideal  $\mathfrak{m}$  and residue field $k$.
Let $\cI$ be an ideal.
Then for any $i$, $\dim_k \Tor_i(\cI^m,k)$ is a polynomial in $m$ for $m$ large enough.
In fact, by \cite[Appendix]{Avr78}, for  all large  $m$ and for any $i$,
$\Tor_i(\cI^m,k) \simeq \Tor_{i+1}(R/\cI^m,k)$.
Then the dimension estimate follows from Theorem 2 of \cite{Kod93}.

Now take $X$, a projective manifold and a point $x \in X$.
Take $R=\cO_{X,x}$.
Let $\varphi$ be a quasi psh function with analytic singularities such that $\cI(m \varphi)=\cI^{c_1 m+c_2}$ for some ideal $\cI$ with $x \in V(\cI)$ and $c_1 \in \N^*, c_2 \in \Z$ for any $m >0$ large enough.
(For example, take $I=\mathfrak{m}_x$ and such $\varphi$ is constucted in Exercise 5.10 \cite{Dem12}.)
Take $\cF=\cO_{X}/\mathfrak{m}_x$ supported at $x$.
Since $\Tor_i(\cI^m,\cF) (\forall i >0)$ is supported at $x$,
we have that
$$h^0(X,\Tor_i(\cI^m,\cF))= \dim_\C \Tor_i(\cI_x^m,\C).$$
In particular, it can have a polynomial growth in $m$ of degree strictly bigger than the dimension of its support (see e.g. Example 10 \cite{Kod93}).
Therefore, the analogue of Theorem 2 in terms of $ L^{\otimes m} \otimes \Tor_i(\cI(m\varphi),\cO_X/\mathfrak{m}_x)(\forall i>0)$ instead of the tensor product $L^{\otimes m} \otimes \cI(m \varphi) \otimes \cO_X/\mathfrak{m}_x$ does not hold.
}
\end{myex}
Another easy case is the following.
\begin{myprop}
Let $X$ be a compact complex manifold of dimension $n$.
Let $\cF$ be a coherent sheaf of rank $r$, and $L$ a holomorphic line bundle with a singular Hermitian metric on $X$ with quasi-psh potential $\varphi$ with analytic singularities.
Assume that the non-locally-free locus of $\cF$ is disjoint from the pole set of $\varphi$.
Let us denote by $X(q),0 \leq q \leq n$, the open subset of points of $x \in X$ that are of index $q$, i.e. points $x$ at which the $(1,1)$ Chern curvature form $ic_1(L)(x)$ is smooth at $x$ and has exactly $q$ negative and $n-q$ positive eigenvalues.
We also put $$X(\leq q)=X(0) \cup X(1) \cup \cdots \cup X(q).$$
For all degrees $q= 0,1, \cdots,n$, the following asymptotic inequalities hold when $m$ tends to $+\infty$.
\begin{enumerate}
\item Morse inequalities:
$$\dim H^q(X,L^m \otimes \cF \otimes \cI(m \varphi))\leq r \frac{m^n}{ n !} \int_{X(q)}(-1)^q(i2 \pi c_1(L))^n+o(m^n).$$
\item Strong Morse inequalities:
$$\sum_{j=0}^q (-1)^{q-j} \dim H^j(X,L^m \otimes \cF  \otimes \cI(m \varphi))\leq r \frac{m^n}{ n !} \int_{X(\leq q)}(-1)^q(i2 \pi c_1(L))^n+o(m^n).$$
\end{enumerate}
\end{myprop}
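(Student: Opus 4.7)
The plan is to mirror the proof of Theorem 3, substituting Bonavero's holomorphic Morse inequalities for a line bundle with an analytic singular metric tensored with a fixed vector bundle in place of Demailly's smooth version. Since (1) follows from (2), I concentrate on the strong form.

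The first step is to reduce to $\cF$ torsion-free. By the disjointness hypothesis, $\mathrm{supp}\,\cF_{\tors}$ lies in the non-locally-free locus of $\cF$ and is therefore disjoint from $\Sing(\varphi)$. Hence $\cI(m\varphi)$ is trivial on a neighbourhood of $\mathrm{supp}\,\cF_{\tors}$, giving $\cF_{\tors}\otimes\cI(m\varphi)\simeq\cF_{\tors}$. On the same neighbourhood $\cF/\cF_{\tors}$ is locally free, hence flat, while away from $\Sing(\varphi)$ the ideal $\cI(m\varphi)$ equals $\cO_X$, so $\Tor_1(\cF/\cF_{\tors},\cI(m\varphi))=0$ and the torsion short exact sequence stays exact after tensoring with $\cI(m\varphi)$. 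Theorem 2 applied to the \emph{fixed} torsion sheaf $\cF_{\tors}$ of support dimension $<n$ absorbs its contribution into the $o(m^n)$ error.

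Next, I would use \cite{Ros} combined with resolution of singularities to pick a modification $\pi\colon\tilde X\to X$ with $\tilde X$ smooth, $\cG:=\pi^*\cF/\tors$ locally free of rank $r$, and $\pi$ an isomorphism over an open neighbourhood $U$ of $\Sing(\varphi)$ -- possible since all blow-up centres can be kept inside the non-locally-free locus $S$ of $\cF$, which is disjoint from $\Sing(\varphi)$. As in the proof of Theorem 3, the natural map
$$\cF\otimes\cI(m\varphi)\longrightarrow\pi_*\bigl(\cG\otimes\cI(m\pi^*\varphi)\bigr)$$
is an isomorphism over $U$, and over $X\setminus\Sing(\varphi)$ (where both multiplier ideals are trivial) reduces to $\cF\to\pi_*\cG$, whose kernel and cokernel are $m$-independent torsion sheaves supported in $S$ of dimension $<n$, again absorbed by Theorem 2. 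For the Leray spectral sequence, the projection formula together with the same dichotomy yields $R^q\pi_*(\cG\otimes\cI(m\pi^*\varphi))=R^q\pi_*\cG$ for $q>0$: the multiplier ideal is trivial away from $\Sing(\varphi)$, and near $\Sing(\varphi)$ where $\pi$ is biholomorphic the higher direct images vanish. Theorem 2 then gives $\dim E_2^{p,q}=o(m^n)$ for $q>0$, and the spectral-sequence bookkeeping used in the proof of Theorem 3 produces
$$\sum_{j=0}^q(-1)^{q-j}h^j(X,\cF\otimes L^m\otimes\cI(m\varphi))=\sum_{j=0}^q(-1)^{q-j}h^j(\tilde X,\cG\otimes\cI(m\pi^*\varphi)\otimes\pi^*L^m)+o(m^n).$$

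Finally, Bonavero's strong Morse inequalities on the smooth manifold $\tilde X$ applied to the locally free sheaf $\cG$, the line bundle $\pi^*L$, and the singular metric $\pi^*\varphi$ bound the right-hand side by $r\tfrac{m^n}{n!}\int_{\tilde X(\leq q)}(-1)^q(i2\pi c_1(\pi^*L))^n+o(m^n)$; since $\pi$ is biholomorphic over a neighbourhood of $\Sing(\varphi)$ and the exceptional set of $\pi$ lies over $S\subset X\setminus\Sing(\varphi)$, where $c_1(L,h)$ is smooth, this integral coincides with $\int_{X(\leq q)}(-1)^q(i2\pi c_1(L))^n$ (the exceptional locus has Lebesgue measure zero). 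The main technical obstacle is the simultaneous arrangement of the modification -- making $\cG$ locally free while leaving a neighbourhood of $\Sing(\varphi)$ unchanged -- together with the Tor-vanishing used in the torsion reduction; both rely essentially on the disjointness hypothesis, and Example 1 indicates that the approach genuinely fails without it.
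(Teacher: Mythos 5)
Your proposal is correct and follows essentially the same route as the paper: reduce to the torsion-free case using the disjointness hypothesis and Theorem 2, pass to a Rossi modification that is an isomorphism near the pole set, absorb the $m$-independent kernel/cokernel of the comparison map and the higher direct images via Theorem 2, and conclude with the Leray spectral sequence bookkeeping of Theorem 3 together with Bonavero's singular Morse inequalities on $\tilde X$. The only cosmetic difference is that the paper's comparison map carries an extra twist by $K_{\tilde{X}/X}$ (via Demailly's Proposition 5.8 on functoriality of multiplier ideals), which changes nothing since that twist is locally free of rank one and the discrepancy between the two maps is supported away from the poles.
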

\begin{proof}
Since $\cF_{\Tor} \otimes L^m \otimes \cI(m \varphi)=\cF_{\Tor} \otimes L^m$ and $\Tor_i(\cF/\cF_{\Tor}, \cI(m \varphi))=0(\forall i>0, \forall m)$ by assumption and by Theorem 2, we may assume that $\cF$ is torsion free.
By \cite{Ros}, there exists a modification $\pi: \tilde{X} \to X$ such that $\tilde{X}$ is smooth and $\pi^* \cF/\Tor$ is locally free.
Without loss of generality, we may assume $\pi$ is identity over the non-locally-free locus $U$ of $\cF$.
In particular, for any $q \geq 1$, over $U$,
$R^q \pi_*(\pi^* \cF/\Tor \otimes \cI(m \pi^* \varphi))=0=R^q \pi_*(\pi^* \cF/\Tor)$ and $\pi_*(\pi^* \cF/\Tor\otimes K_{\tilde{X}/X} \otimes \cI(m \pi^* \varphi))=\cF \otimes \cI(m \varphi)$.
By Proposition 5.8 \cite{Dem12}, there exist natural morphisms for any $m$,
$$\cF \otimes \cI(m \varphi) \to \pi_*(\pi^* \cF/\Tor \otimes \pi^* \cI(m  \varphi)) \to \pi_*(\pi^* \cF/\Tor \otimes K_{\tilde{X}/X} \otimes \cI(m \pi^* \varphi)).$$
Over any open set $V$ disjoint from the pole set of $\varphi$,
$R^q \pi_*(\pi^* \cF/\Tor \otimes \cI(m \pi^* \varphi))=R^q \pi_*(\pi^* \cF/\Tor)$.
Over $V$, the above natural morphism is independent of $m$ and $\varphi$.
By our assumption, these two kinds of open sets cover $X$.
In particular, the kernel and cokernel of the natural morphism are independent of $m$, which is the kernel and cokernel of $\cF  \to \pi_*(\pi^* \cF/\Tor \otimes K_{\tilde{X}/X})$.
By Theorem 2,
we have estimates of the growth of cohomologies for all appearing torsion sheaves.
The same spectral sequence calculations as in Theorem 3 conclude the proof by changing Demailly's Morse inequalities by Bonavero's applying for $\pi^* \cF/\Tor\otimes K_{\tilde{X}/X} \otimes \cI(m \pi^* \varphi) \otimes \pi^* L^m$.
\end{proof} 

Demailly suggested that
to generalize the Morse inequalities to a line bundle with a singular metric with analytic singularities tensoring with a coherent sheaf, one should consider the following sheaf of $L^2$ sections.
The basic idea is to consider the $L^2$ sections of $K_X \otimes L \otimes \cF$ together instead of considering the $L^2$ sections of $L$ tensoring sections of $\cF$, which will cause difficulties in the spectral sequence calculation.
\begin{mydef}
Let $\cF$ be a coherent sheaf over a compact irreducible Gorenstein normal unibranch (i.e. $\forall x, \cO_{X,x}$ is integral) complex space $X$.
Let $L$ be a line bundle over $X$ with a singular metric with quasi-psh potential $\varphi$.
Locally, there exists a surjective morphism
$$\cO_U^{N_U} \to \cF \to 0.$$
Endow $\cO_U^{N_U}$ with a smooth metric and take the quotient metric on $\cF$, which is well defined on a Zariski open set.
Take a partition of unity of $X$  and glue the quotient metrics to a smooth metric $h$ on $\cF$ well defined on the open set $V$ where $\cF$ is locally free.
Define the sheaf of $L^2$ sections of $K_X \otimes L \otimes \cF$ with respect to $\varphi$ as follows
$$(K_X \otimes L \otimes \cF \otimes \cI(\varphi))^{L^2}_x:= \{s \in (K_X \otimes L \otimes \cF)^{**}_x; \exists x \in U, \int_{U \cap V} |s|_{h}^2 e^{-\varphi} < \infty \}.$$
\end{mydef} 
If $ \cF$ is torsion, then this sheaf is 0.
In general, we have
$$(K_X \otimes L \otimes \cF \otimes \cI(\varphi))^{L^2} =(K_X \otimes L \otimes \cF/ \tors \otimes \cI(\varphi))^{L^2}.$$
It can be checked that the sheaf is independent of the choice of the partition of unity and smooth metrics on $\cO_U^{N_U}$.
If $\cF$ is trivial, 
 $$(K_X \otimes L \otimes \cI(\varphi))^{L^2}= K_X \otimes L  \otimes \cI(\varphi).$$
 Notice that we have the following lemma analogous to the coherence of multiplier ideal sheaf \cite{Nad90}.
\begin{mylem}
Under the assumption of Definition 3, assume that $\cF$ is a torsion-free sheaf.
Let $\pi: \tilde{X} \to X$ be a modification of $X$ such that $\pi^* \cF/ \tors$ is locally free.
Then we have
$$ (K_X \otimes L \otimes \cF \otimes \cI(\varphi))^{L^2}=\pi_* (K_{\tilde{X}} \otimes \pi^* \cF/ \tors  \otimes \pi^* L \otimes \cI(\varphi \circ \pi)).$$
In particular, $(K_X \otimes L \otimes \cF \otimes \cI(\varphi))^{L^2}$ is coherent.
\end{mylem}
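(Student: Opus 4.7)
The plan is to follow the blueprint of Nadel's coherence theorem for multiplier ideal sheaves, adapted to allow a coefficient sheaf $\cF$ and a singular base $X$. The question is local on $X$, so I fix a neighbourhood $U$ of a point and compare stalks.

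Let $V$ be the locally-free locus of $\cF$, let $E \subset \tilde X$ be the exceptional set of $\pi$, and set $W := (V \cap X_{\reg}) \setminus \pi(E)$, an open dense subset of $X$. On $W$ both sides of the claimed equality coincide with the ordinary tensor product $K_X \otimes L \otimes \cF \otimes \cI(\varphi)$, so only the $L^2$-integrability condition at points of the complement is at issue. The key tool is the Jacobian change-of-variables formula
$$\int_{U \cap W} |s|_h^2 \, e^{-\varphi} \;=\; \int_{\pi^{-1}(U) \setminus E} |\pi^* s|^2 \, e^{-\varphi \circ \pi},$$
where the pullback $\pi^* s$ is interpreted as a section of $K_{\tilde X} \otimes \pi^* \cF / \tors \otimes \pi^* L$ through the canonical identification $K_{\tilde X} \cong \pi^* K_X \otimes K_{\tilde X/X}$, which exists because $X$ is Gorenstein and $\pi$ is bimeromorphic. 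This identity is the usual change of variables applied on the isomorphism locus of $\pi$, which is of full measure.

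For the inclusion $(\supseteq)$, a germ $\tilde s$ on the right descends on $\tilde X \setminus E$ to a section $s$ of $K_X \otimes L \otimes \cF$ on $U \cap W$; by the identity above $s$ has finite $L^2$ mass, and since $K_X \otimes L$ is a line bundle and $\cF/\tors$ is torsion-free, $s$ extends to a germ of $(K_X \otimes L \otimes \cF)^{**}$ by Hartogs-type extension for reflexive sheaves. For the converse $(\subseteq)$, a germ $s$ on the left pulls back on $\tilde X \setminus E$ to a section $\tilde s$ that is $L^2$ with respect to $e^{-\varphi \circ \pi}$ by the same identity. Since $\pi^* \cF / \tors$ is locally free on the smooth space $\tilde X$, the $L^2$ condition forces $\tilde s$ to extend holomorphically across $E$ and to lie in the multiplier ideal $\cI(\varphi \circ \pi)$, which is coherent by Nadel's theorem. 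The coherence of the left-hand side then follows from Grauert's direct image theorem since $\pi$ is proper.

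The main obstacle is twofold. First, the quotient metric $h$ from Definition 3 is only canonically determined up to bounded factors on compact subsets of $V$; one must check that the $L^2$ condition is insensitive to this ambiguity, which it is because only the integrability of $e^{-\varphi}$ near its poles affects the answer. Second, one must simultaneously handle the singular locus of $X$, the non-locally-free locus of $\cF$, the polar set of $\varphi$, and the exceptional set $E$, whose union may have codimension-one components; here the Gorenstein hypothesis ensures $K_X$ is a line bundle so that the relative canonical $K_{\tilde X/X}$ has a clean Jacobian interpretation, while the normal unibranch hypothesis permits the reflexivity/Hartogs extension invoked in $(\supseteq)$.
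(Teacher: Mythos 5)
Your proposal is correct and follows essentially the same route as the paper, which simply defers to the proof of Proposition 5.8 in Demailly's book: the Jacobian change-of-variables through $K_{\tilde X}\cong\pi^*K_X\otimes K_{\tilde X/X}$ on the isomorphism locus, extension across the exceptional set and the codimension-two bad loci (guaranteed here by normality), and coherence via Grauert's direct image theorem applied to the coherent sheaf on $\tilde X$. The only point worth tightening is that the metric implicit in $|\pi^*s|^2$ on the right-hand side is the pulled-back quotient metric on the locally free sheaf $\pi^*\cF/\tors$, which is smooth and positive definite on all of $\pi^{-1}(U)$, so the $L^2$ condition there is exactly membership in $K_{\tilde X}\otimes\pi^*\cF/\tors\otimes\pi^*L\otimes\cI(\varphi\circ\pi)$.
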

\begin{proof}
The proof follows identically as Proposition 5.8 of \cite{Dem12}, which we omit here. 
\end{proof}

\begin{myex}
{\em 
(1) If $\cF$ is locally free over a smooth manifold, then we have
$$(K_X \otimes L \otimes \cF \otimes \cI(\varphi))^{L^2}=K_X \otimes L \otimes \cF \otimes \cI(\varphi)^{}.$$
(2)Let $\cF=\mathfrak{m}_0^k$ the power of the maximal ideal at the origin in $\C^n$.
There exists a surjection
$$E:=\cO_{\C^n}^{\binom{n+k-1}{k}} \to \mathfrak{m}_0^k$$ sending $(u_j)$ to $\sum_j u_j e_j$ with $(e_j)$ a basis of polynomial of degree $k$.
We endow $\cF$ by the quotient metric induced from the trivial metric on $E$. 
We claim that for $k \geq n-1$
$$(K_X \otimes \cF)^{L^2}=K_X \otimes \mathfrak{m}_0^{k-n+1}.$$
On one hand, we have that at $z \neq 0$, the quotient metric of $f$ denoted by $|f(z)|_Q^2$ satisfies 
$$|f(z)|_{Q}^{2} =\inf \sum_i |u_i|^2 \geq \frac{|f(z)|^2}{\sum_j |e_j(z)|^2}  $$
by Schwarz inequality for any $(u_j)\in \C^{\binom{n+k-1}{k}} $ such that $f(z)=\sum_j u_j e_j(z)$.
On the other hand, we have that at the same point
$$|f(z)|_{Q}^{2}  \leq \frac{|f(z)|^2}{\sum_j |e_j(z)|^2}  $$
if we take $u_i=f(z)\overline{e}_i(z)/(\sum_j |e_j(z)|^2)$.
In particular, the quotient metric is equivalent to $\frac{1}{|z|^{2k}}$.
Direct calculation concludes (cf. Exercise 5.10 \cite{Dem12}).

(3) More generally, let $\cF$ be an ideal sheaf over a smooth manifold from which we can associate to a quasi psh function $\varphi$ with analytic singularities (taking into account multiplicities).
Then we can check by definition that $(K_X \otimes \cF)^{L^2}=K_X \otimes \cI(\varphi)$.
}
\end{myex}
Sometimes we also can control higher direct images.
\begin{mylem}
With the same notations as in the previous lemma, assume furthermore that $\pi$ is a blow up of smooth center and $ \pi^* \cJ \cdot \cO_{\tilde{X}}= \cO(-D)$.
Then for $i>0$, for $k$ large enough,
$$R^i \pi_* (K_{\tilde{X}} \otimes \pi^* \cF/ \tors  \otimes \pi^* L^k \otimes \cI(k \varphi \circ \pi))=0.$$
\end{mylem}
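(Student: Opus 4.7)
My plan is to make the multiplier ideal $\cI(k\,\varphi\circ\pi)$ completely explicit as a line bundle, then apply the projection formula and relative Serre vanishing. Since $\pi^*\cJ\cdot\cO_{\tilde{X}}=\cO(-D)$, if locally $\varphi = c\log(\sum_i|g_i|^2)+C^\infty$ with $\cJ$ the integral closure of $(g_i)$, then locally $\pi^*g_i = s_D \cdot u_i$, where $s_D$ is a defining section of $D$ and the $(u_i)$ have no common zeros. Hence
$$\varphi\circ\pi \,=\, c\log|s_D|^2\,+\,\log\!\Big(\sum_i|u_i|^2\Big)\,+\,O(1),$$
and the second term is locally bounded. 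Because $\pi$ is the blowup of a single smooth center $Y \subset X$ with smooth exceptional divisor $E$, and $\cJ$ is trivial away from $V(\cJ) \subseteq Y$, the divisor $D$ is supported on $E$; so $D=aE$ for some integer $a\geq 1$. A direct local integral computation (the multiplier ideal of $c\log|f|^2$ along a smooth Cartier divisor $\{f=0\}$ is $(f^{\lfloor c\rfloor})$) then gives
$$\cI(k\,\varphi\circ\pi) \,=\, \cO_{\tilde{X}}(-\lfloor kca\rfloor\,E).$$

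The vanishing is local on $X$ and $L^k$ is a line bundle on $X$, so by the projection formula the claim reduces to showing
$$R^i\pi_*\!\left(K_{\tilde{X}}\otimes\pi^*\cF/\tors\otimes\cO_{\tilde{X}}(-\lfloor kca\rfloor E)\right)=0 \qquad(i>0,\ k\gg 0).$$
Since $\pi$ is the blowup of a codimension-$r$ smooth center, $\cO_{\tilde{X}}(-E)$ restricts on each fiber $\pi^{-1}(y)\simeq\P^{r-1}$ to $\cO_{\P^{r-1}}(1)$, hence it is $\pi$-very ample. Apply relative Serre vanishing to the coherent sheaf $\cG:=K_{\tilde{X}}\otimes\pi^*\cF/\tors$ and the $\pi$-ample line bundle $\cO_{\tilde{X}}(-E)$: there exists $m_0$ such that $R^i\pi_*(\cG\otimes\cO_{\tilde{X}}(-E)^{\otimes m})=0$ for all $i>0$ and all $m\geq m_0$. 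Choosing $k$ large enough so that $\lfloor kca\rfloor\geq m_0$ concludes the proof.

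The step I expect to be most delicate is the identification $D=aE$ and the clean computation of the multiplier ideal: the hypothesis $\pi^*\cJ\cdot\cO_{\tilde{X}}=\cO(-D)$ together with $\pi$ being a single smooth blowup forces $D$ to be supported on $E$ when $V(\cJ)\subseteq Y$, but one must verify that the integral closure $\cJ$ of Definition 2 is genuinely adapted to this blowup so that $\cI(k\,\varphi\circ\pi)$ is exactly the principal ideal above (including the correct handling of the floor function $\lfloor kca\rfloor$). Once this identification is in place, the remainder is a routine application of relative Serre vanishing for a $\pi$-ample line bundle.
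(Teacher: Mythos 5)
Your argument is correct in substance but takes a genuinely different route from the paper's. You algebraize the problem: identify $\cI(k\,\varphi\circ\pi)$ with the invertible sheaf $\cO_{\tilde X}(-\lfloor kca\rfloor E)$, strip off $\pi^*L^k$ by the projection formula, and invoke relative Serre vanishing for the $\pi$-ample line bundle $\cO_{\tilde X}(-E)$ applied to the fixed coherent sheaf $K_{\tilde X}\otimes \pi^*\cF/\tors$. The paper instead runs the analytic incarnation of the same idea, following Bonavero's thesis: over a Stein open set $U\subset X$ with a strictly psh function $\psi$, it puts the quotient metric $h$ on $\pi^*\cF/\tors$ coming from a surjection $\cO_U^{N_U}\to\cF$, uses the relative positivity of $\cO(-D)$ to make $C\pi^* i\d\dbar\psi-\Theta(\cO(D),h_D)$ positive on $\pi^{-1}(U)$, and shows that for $k$ large the metric $e^{-\lfloor kc\rfloor C\psi}h_D^{-\lfloor kc\rfloor}h$ on $\pi^*\cF/\tors\otimes\cI(k\varphi\circ\pi)$ is Nakano positive, so that $H^i(\pi^{-1}(U),K_{\tilde X}\otimes\cdots)=0$ for $i>0$ by the $L^2$ method. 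Both proofs rest on the same two pillars, namely the relative ampleness of $\cO(-D)$ and the identification $\cI(k\,\pi^*\varphi)=\cO(-\lfloor kc\rfloor D)$ (which the paper also asserts), so your route is a legitimate shortcut; what the paper's route buys is that it stays entirely within the $L^2$ framework used elsewhere in the note and avoids importing relative GAGA/Serre vanishing into the analytic category.

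Two points need to be made explicit for your version to be complete. First, relative Serre vanishing must be quoted in its analytic form (Grauert, or B\u{a}nic\u{a}--St\u{a}n\u{a}\c{s}il\u{a}): $\pi$ is a proper map of complex spaces which is locally over $X$ a closed subspace of $U\times\P^N$ with $\cO(-E)$ restricting to $\cO(1)$, and the uniformity of the threshold $m_0$ comes from a finite cover of the compact base. Relative ampleness of $\cO(-E)$ is better justified via the Proj description of the blow-up ($\cI_Y\cdot\cO_{\tilde X}=\cO(-E)$ is the tautological $\cO(1)$) than via fibers $\P^{r-1}$, which is only accurate where the center is regularly embedded in a smooth ambient space. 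Second, your reduction $D=aE$ presupposes $V(\cJ)$ is contained in the center, and the formula $\cI(\lambda\log|s_E|^2)=\cO(-\lfloor\lambda\rfloor E)$ is a statement on a smooth ambient space, whereas $\tilde X$ need not be smooth when $X$ is singular; this is shared ground with the paper's own line ``$\cI(k\pi^*\varphi)=\cO(-\lfloor kc\rfloor D)$'' rather than a defect specific to your argument, but it deserves a sentence rather than being folded silently into the word ``delicate.''
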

\begin{proof}
The proof follows from the proof of Proposition on Page 38 of \cite{Bon95}.
Without loss of generality, we can assume $U$ is Stein subspace contained in some Stein open set of euclidean space $\Omega$.
We want to apply the usual $L^2$ method to show that for $i>0$,
$$H^i (\pi^{-1}(U), K_{\tilde{X}} \otimes \pi^* \cF/ \tors   \otimes \cI(k \varphi \circ \pi))=0$$
for $k$ large enough.
Let $\psi$ be a strict psh function on $U$. 
Take a smooth metric on $\cO_{\Omega}^{N_U}$. 
We have a surjective morphism
$$\pi^* \cO_U^{N_U} \to \pi^* \cF/ \tors \to 0$$
since the pullback functor is right exact.
In particular, $\pi^* \cF/ \tors$ is a quotient bundle of $\pi^* \cO_U^{N_U}$ with induced quotient metric $h$.
The quotient metric on a Zariski open set on which $\cF$ is locally free is the same as the quotient metric on $\pi^* \cF/ \tors$ on $\tilde{X} \setminus D$.
Notice that the quotient metric $h$ is smooth on $\pi^{-1}(U)$, not only on the preimage of the locally free locus of $\cF$.

Note that the restriction of $\cO(-D)$ on $D$ is relative ample.
Thus for $C$ large enough (with a possible shrinking of $U$),
$C \pi^* i \d \dbar \psi - \Theta(\cO(D), h_D)$
is strictly positive on $\pi^{-1}(U)$
for some chosen smooth metric $h_D$ on $\cO(D)$.
$\cI(k\pi^* \varphi)=\cO(-\lfloor kc \rfloor D)$ for some $c>0$.
In particular, for $k$ large enough, the metric
$e^{-\lfloor kc \rfloor C \psi} h_D^{-\lfloor kc \rfloor} h $ on $\pi^* \cF/ \tors   \otimes \cI(k \varphi \circ \pi)$
is Nakano positive.
Then the $L^2$ method applies for $C>0, k$ large enough (cf. e.g. Theorem 5.2 of 
\cite{Dem12}). 
Since all smooth metric on $\cO_U^{N_U}$ defines the same sheaf, we can choose a smooth metric such that $C$ is as large as possible. 
\end{proof}
\begin{myconj}(Demailly)

Let $X$ be compact irreducible Gorenstein normal unibranch complex space
and let $L$ be a holomorphic line bundle on $X$ equipped with a singular Hermitian metric.
Let $h=h_\infty e^{-\varphi}$ such that $h_\infty$ is smooth and $\varphi$ is quasi-psh with analytic singularities.
Let $\cF$ be a coherent torsion-free sheaf
on $X$.
Do Morse inequalities hold for $(K_X \otimes L^m \otimes \cF \otimes \cI(m \varphi))^{L^2}$ as $m \to \infty$?
\end{myconj}
If the assumption of Lemma 5 is satisfied,
then the Leray spectral sequence degenerates at step two, and we have holomorphic Morse inequalities.
In general, the higher direct images are non-trivial, and it seems difficult to control the asymptotic behaviour.

\paragraph{}
\textbf{Acknowledgement} I thank Jean-Pierre Demailly, my PhD supervisor, for his guidance, patience, and generosity. 
I would like to thank my post-doc mentor Mihai P\u{a}un for many supports.
I would like to thank Junyan Cao for some very useful suggestions on the previous draft of this work.
This work is supported by DFG Projekt Singuläre hermitianische Metriken für Vektorbündel und Erweiterung kanonischer Abschnitte managed by Mihai P\u{a}un.
  

\begin{thebibliography}{9}
\bibitem[Avr78]{Avr78}
L. L. Avramov, Small homomorphisms of local rings, J. Algebra 50 (1978), 400-453
\bibitem[Bon98]{Bon98}
Laurent Bonavero,
Inégalités de Morse holomorphes singulières, J. Geom. Anal. 8
(1998), 409–425; announced in C. R. Acad. Sci. Paris 317 (1993),1163–1166.
\bibitem[Bon95]{Bon95}
Laurent Bonavero, In \'egalit\'es de Morse et vari\'et\'es de Moishezon,
PhD thesis, 1995.
\bibitem[Dem85]{Dem85}
Champs magnétiques et inégalités de Morse pour la $\dbar$-cohomologie, C. R. Acad. Sci. Paris Sér. I Math. 301, 13 mai 1985, 119-122 and Ann. Inst. Fourier (Grenoble) 35 (1985) 189-229.
\bibitem[Dem12a]{Dem12}
J.-P. Demailly: \emph{Analytic methods in algebraic geometry}.
{International Press}, 2012.
\bibitem[Dem12b]{agbook}
Jean-Pierre Demailly, 
\newblock{\em Complex analytic and differential geometry.} online-book: https://www-fourier.ujf-grenoble.fr/~demailly/manuscripts/agbook.pdf, (2012).
\bibitem[Gri10]{Gri}
Julien Grivaux, {\it Chern classes in Deligne cohomology for coherent analytic sheaves}, arXiv: 0712.2207v7. Mathematische Annalen 347 (2), (2010), p. 249-284.
\bibitem[Kod93]{Kod93}
Vijay Kodiyalam,
Homological Invariants of Powers of an Ideal,
Proceedings of the American Mathematical Society
Vol. 118, No. 3 (Jul., 1993), pp. 757-764 (8 pages)
\bibitem[Nad90]{Nad90}
Alan~Michael Nadel.
\newblock {\em Multiplier ideal sheaves and {K}\"ahler-{E}instein metrics of
  positive scalar curvature.}
\newblock  Ann. of Math. (2), 132(3):549--596, 1990.
\bibitem[Qui72]{Qui}
D.  Quillen,Higher algebraicK-theory I, inHigher algebraic-theory I,Lecture Notes in Math., 431,Springer, (1972).
 \bibitem[Ros68]{Ros}
{\em Hugo Rossi, Picard variety of an isolated singular point,}
 Rice Univ. Studies 54
(1968), no. 4, 63–73.
\bibitem[St21]{St}
The Stacks Project Authors. https://stacks.math.columbia.edu,2021.
\bibitem[TT76]{TT76}
D. Toledo, Y. L. L. Tong, A parametrix for $\dbar$ and Riemann-Roch in Cech theory, Topology,
15, (1976), 273–301.
\bibitem[Wu20]{Wu20}
Xiaojun Wu, Intersection theory and Chern classes in Bott-Chern cohomology.
  arXiv:2011.13759.
\end{thebibliography}
\end{document}